\def\tr{\mathop{\rm tr}\nolimits}
\def\etr{\mathop{\rm etr}\nolimits}
\def\vol {\mathop{\rm Vol}\nolimits}
\def\diag{\mathop{\rm diag}\nolimits}
\def\rank{\mathop{\rm rank}\nolimits}
\def\ch{\mathop{\rm ch}_{i}\nolimits}
\newcommand {\boldgreektext}[1] {\boldmath
             \(#1\)\unboldmath}
\newcommand {\boldgreek}[1]
             {\mbox{\boldgreektext{#1}}
            }
\renewenvironment{abstract}
                 {\vspace{6pt}
                  \begin{center}
                  \begin{minipage}{5in}
                  \centerline{\textbf{Abstract}}
                  \noindent\ignorespaces
                 }
                 {\end{minipage}\end{center}}
\newtheorem{theorem}{\textbf{Theorem}}[section]
\newtheorem{proposition}{\textbf{Proposition}}[section]
\theoremstyle{definition}
\newtheorem{definition}{\textbf{Definition}}[section]
\newtheorem{remark}{\textbf{Remark}}[section]
\title{\Large \textbf{About the matrix variate problem involved in the distribution of $\mathbf{E}^{-1}\mathbf{H}$}}
\author{
  \textbf{Jos\'e A. D\'{\i}az-Garc\'{\i}a} \thanks{Corresponding author\newline
   {\bf Key words.} Jacobians, matrix variate beta distributions type II, Haar measure.\newline
    2000 Mathematical Subject Classification. 60E05; 62E15; 15A23; 15B52}\\
  {\normalsize Universidad Aut\'onoma de Chihuahua} \\
  {\normalsize Facultad de Zootecnia y Ecolog\'{\i}a} \\
  {\normalsize Perif\'erico Francisco R. Almada Km 1, Zootecnia} \\
  {\normalsize 33820 Chihuahua, Chihuahua, M\'exico}\\
  {\normalsize E-mail: jadiaz@uach.mx}\\
  \textbf{Francisco J. Caro-Lopera}\\
  {\normalsize University of Medellin} \\
  {\normalsize Faculty of Basic Sciences} \\
  {\normalsize Carrera 87 No.30-65} \\
  {\normalsize Medell\'{\i}n, Colombia} \\
  {\normalsize E-mail: fjcaro@udemedellin.edu.co} \\[2ex]
}
\date{}
\begin{document}
\maketitle

\begin{abstract}
This work studies the distribution of the nonsymmetric matrix $\mathbf{E}^{-1}\mathbf{H}$. This random product is of fundamental interest under the general multivariate linear hypothesis setting. Specifically when $\mathbf{H}$ and $\mathbf{E}$ are seen as the sums of squares and the sums of products due to the hypothesis and due to the error, respectively.
\end{abstract}                 

\section{Introduction}\label{sec1}

The beta type II distribution play a fundamental roll with testing equality of two covariance matrices, with the MANOVA case, and with testing independence of two sets of variates. In particular in MANOVA case, any invariant test depends only on latent roots of matrix $\mathbf{E}^{-1}\mathbf{H}$, see \citet[Theorem 10.2.1, p. 437]{mh:05}. At first, it would seem of interest to find the distribution of the matrix $\mathbf{E}^{-1}\mathbf{H}$ and thereby subsequently study the joint distribution of its latent roots. However, since $\mathbf{E}^{-1}\mathbf{H}$, $\mathbf{H}^{1/2}\mathbf{E}^{-1}\mathbf{ H} ^{1/2}$, $\mathbf{E}^{-1/2}\mathbf{H}\mathbf{E}^{-1/2}$ have the same latent roots and the last two are matrices symmetric, it was no longer of interest to study the distribution of the non-symmetric matrix $\mathbf{E}^{-1}\mathbf{H}$. This is due to its complexity and to the fact that, in general, it is more accessible to study the symmetric case due to all the development obtained regarding results in distribution theory, numerical methods and matrix algebra in the case of symmetric matrices. As a consequence, without exception, all authors in the statistical literature addressed the problem from the two symmetric expressions, both in classical and generalised multivariate statistical analysis, see \citet{r:57}, \citet{sk:79}, \citet{a:82},  \citet{fz:90}, \citet{re:02}, \citet{mh:05}, and \citet{gvb:13}, and references therein. We have placed the discussion of this problem in the following stages: Section \ref{sec2} provides two Jacobians  and collects some necessary results for the development of the main result. Then the distribution of the nonsymmetric matrix $\mathbf{E}^{-1}\mathbf{H}$ is obtained in Section \ref{sec3}.

\section{Notation and preliminaries results}\label{sec2}

The discussion is based on the following results.
\subsection{Notation} 
Let $\mathcal{L}_{m,n}$ be the \textit{linear space} of all $n \times m$ matrices of rank $m \leq n$ on $\Re$ (the set of real numbers). The \textit{general linear group} over $\Re$ is the group of $\mathbf{A} \in \mathcal{L}_{m,m}$ invertible matrices, and shall be denoted by $GL(m)$. The subgroup $\mathcal{O}(m)$ of the general linear group $GL(m)$ is defined as $\mathcal{O}(m)=\{\mathbf{G} \in GL(m)|\mathbf{G}'\mathbf{G}=\mathbf{GG}'=\mathbf{I}_{m}\}$, that is, \textit{the group of $m \times m$ orthogonal matrices}; where $\mathbf{G}'$ denotes the \textit{transpose} of $\mathbf{G}$. If $\mathbf{A} \in GL(m)$ its $i-$th  \textit{latent root} $\theta_{i}$ shall be denoted as $\ch(\mathbf{A}) = \theta_{i}$. We denote by ${\mathfrak S}_{m}$ the \textit{real vector space} of all $\mathbf{S} \in \mathcal{L}_{m,m}$ such that $\mathbf{S} = \mathbf{S}'$. Let $\mathfrak{P}_{m}$ be the \emph{cone of positive definite matrices} $\mathbf{S} \in GL(m)$; then $\mathfrak{P}_{m}$ is an open subset of ${\mathfrak S}_{m}$.

\subsection{Jacobians}

Let $\mathbf{C}$ and $\mathbf{X}$ positive definite matrices (hence symmetric matrices) and define $\mathbf{Y} = \mathbf{CX}$, therefore $\mathbf{Y}$ is not necessarily a symmetric matrix. In this section the Jacobian of this transformation is calculated assuming that $\mathbf{Y}$ is a symmetric and a non-symmetric matrix. 

\begin{theorem}\label{teo1}
  Let $\mathbf{Y} \in GL(m)$ and $\mathbf{C}, \mathbf{X} \in \mathfrak{P}_{m}$. Define $\mathbf{Y} = \mathbf{XC}$, then
  
  \begin{equation}\label{eq1}
    (d\mathbf{Y}) = |\mathbf{C}|^{m}\prod_{i<j=1}^{m}(\lambda_{i}+\lambda_{j})(d\mathbf{X})(\mathbf{G}'d\mathbf{G})
  \end{equation}
  where $\lambda_{i}= \ch(\mathbf{X})$, $i = 1, \dots,m$, $\mathbf{G} \in \mathcal{O}(m)$ and $(\mathbf{G}'d\mathbf{G})$ is the Haar measure on $\mathcal{O}(m)$, see \citet[Sectiom 2.1.4, p. 67]{mh:05}.
\end{theorem}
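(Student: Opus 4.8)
I read the asserted change of variables as the composition
\[
(\mathbf{X},\mathbf{G})\longmapsto \mathbf{Y}=(\mathbf{X}\mathbf{G})\,\mathbf{C},
\qquad \mathbf{X}\in\mathfrak{P}_{m},\quad \mathbf{G}\in\mathcal{O}(m),
\]
where $\mathbf{X}\mathbf{G}$ is the polar factorisation of the nonsingular matrix $\mathbf{Z}:=\mathbf{Y}\mathbf{C}^{-1}$ into its symmetric positive definite factor $\mathbf{X}=(\mathbf{Z}\mathbf{Z}')^{1/2}$ and its orthogonal factor $\mathbf{G}$; this is what supplies the group variable $\mathbf{G}$ that is otherwise absent from the bare product $\mathbf{X}\mathbf{C}$. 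The reading is forced by counting differentials: $(d\mathbf{Y})$ carries $m^{2}$ of them, while $(d\mathbf{X})$ carries $m(m+1)/2$ and the Haar measure $(\mathbf{G}'d\mathbf{G})$ carries $m(m-1)/2$, and these add to $m^{2}$. The plan is to evaluate the two stages $\mathbf{Z}\mapsto\mathbf{Z}\mathbf{C}$ and $(\mathbf{X},\mathbf{G})\mapsto\mathbf{X}\mathbf{G}$ separately and then invoke the chain rule for Jacobians.

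The first stage is painless. With $\mathbf{C}$ held fixed, $\mathbf{Z}\mapsto\mathbf{Y}=\mathbf{Z}\mathbf{C}$ is a linear map on the full space of $m\times m$ matrices, and since $\operatorname{vec}(\mathbf{Z}\mathbf{C})=(\mathbf{C}'\otimes\mathbf{I}_{m})\operatorname{vec}(\mathbf{Z})$ its Jacobian is $\det(\mathbf{C}'\otimes\mathbf{I}_{m})=|\mathbf{C}|^{m}$; thus $(d\mathbf{Y})=|\mathbf{C}|^{m}(d\mathbf{Z})$. It remains to prove that the polar stage contributes the factor $\prod_{i<j}(\lambda_{i}+\lambda_{j})$, that is, that $(d\mathbf{Z})=\prod_{i<j=1}^{m}(\lambda_{i}+\lambda_{j})\,(d\mathbf{X})(\mathbf{G}'d\mathbf{G})$ with $\lambda_{i}=\ch(\mathbf{X})$.

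For the polar stage I would differentiate $\mathbf{Z}=\mathbf{X}\mathbf{G}$ and strip off the orthogonal factor, writing $(d\mathbf{Z})\mathbf{G}'=d\mathbf{X}+\mathbf{X}\,\boldsymbol{\Omega}$, where $\boldsymbol{\Omega}=(d\mathbf{G})\mathbf{G}'$ is skew--symmetric and right multiplication by the fixed orthogonal $\mathbf{G}'$ preserves the volume element. Because $(d\mathbf{X})$ and $(\mathbf{G}'d\mathbf{G})$ are invariant under $\mathbf{X}\mapsto \mathbf{P}\mathbf{X}\mathbf{P}'$ for a constant $\mathbf{P}\in\mathcal{O}(m)$, the Jacobian can depend only on the eigenvalues of $\mathbf{X}$, so it may be evaluated at a diagonal $\mathbf{X}=\diag(\lambda_{1},\dots,\lambda_{m})$. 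There the $(i,j)$ entry of $d\mathbf{X}+\mathbf{X}\boldsymbol{\Omega}$ is $(d\mathbf{X})_{ij}+\lambda_{i}\Omega_{ij}$; the diagonal entries return the $m$ differentials $(d\mathbf{X})_{ii}$, while each pair $(i,j),(j,i)$ with $i<j$ maps $\big((d\mathbf{X})_{ij},\Omega_{ij}\big)$ through the block $\left(\begin{smallmatrix}1 & \lambda_{i}\\ 1 & -\lambda_{j}\end{smallmatrix}\right)$, whose determinant has modulus $\lambda_{i}+\lambda_{j}$. Wedging the diagonal differentials, the $m(m+1)/2$ factors $(d\mathbf{X})_{ij}$ and the $m(m-1)/2$ factors $\Omega_{ij}$ yields exactly $\prod_{i<j}(\lambda_{i}+\lambda_{j})(d\mathbf{X})(\mathbf{G}'d\mathbf{G})$, and multiplying by the first stage gives \eqref{eq1}. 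The main obstacle is precisely this polar step: one must justify the reduction to diagonal $\mathbf{X}$ from orthogonal invariance and then keep careful track, in the exterior product, of which differentials recombine into $(d\mathbf{X})$, which into the Haar measure, and which off-diagonal pairs produce the $(\lambda_{i}+\lambda_{j})$ factors.
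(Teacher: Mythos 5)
Your argument is correct, but it is not the paper's route. The paper never touches the polar decomposition directly: it passes to $\mathbf{U}=\mathbf{Y}'\mathbf{Y}$ and $\mathbf{V}=\mathbf{X}^{2}$, writes $\mathbf{U}=\mathbf{C}\mathbf{V}\mathbf{C}$, and then chains three quoted Jacobians --- the congruence Jacobian $(d\mathbf{U})=|\mathbf{C}|^{m+1}(d\mathbf{V})$ from \citet[Theorem 2.1.6]{mh:05}, the Jacobian of $\mathbf{U}=\mathbf{Y}'\mathbf{Y}$ from \citet[Theorem 2.1.14]{mh:05} (which is where the Haar measure enters, and which the paper formally inverts as $(\mathbf{G}'d\mathbf{G})^{-1}$), and the matrix-square Jacobian of \citet{mam:97} (which is where $\prod_{i<j}(\lambda_{i}+\lambda_{j})$ enters) --- before solving for $(d\mathbf{Y})$. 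You instead factor the map as the linear stage $\mathbf{Z}\mapsto\mathbf{Z}\mathbf{C}$, with Jacobian $|\mathbf{C}|^{m}=\det(\mathbf{C}'\otimes\mathbf{I}_{m})$ obtained directly, composed with the polar factorisation $\mathbf{Z}=\mathbf{X}\mathbf{G}$, whose Jacobian you compute from first principles by differentiating, invoking orthogonal invariance to reduce to diagonal $\mathbf{X}$, and pairing off the $2\times 2$ blocks of modulus $\lambda_{i}+\lambda_{j}$. What your route buys is twofold: it makes explicit where the orthogonal variable lives (the paper's statement $\mathbf{Y}=\mathbf{X}\mathbf{C}$ has only $m(m+1)/2$ free parameters, and your differential count is exactly the justification the paper omits), and it avoids the formally dubious division by the exterior form $(\mathbf{G}'d\mathbf{G})$. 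What the paper's route buys is brevity and reliance only on standard cited results. One caveat worth recording: your coordinates $\mathbf{X}=(\mathbf{Y}\mathbf{C}^{-2}\mathbf{Y}')^{1/2}$ and the paper's implicit $\mathbf{X}=(\mathbf{C}^{-1}\mathbf{Y}'\mathbf{Y}\mathbf{C}^{-1})^{1/2}$ are genuinely different parametrisations of $GL(m)$; they share the same eigenvalues $\lambda_{i}$ and agree on the slice $\mathbf{G}=\mathbf{I}_{m}$ where $\mathbf{Y}=\mathbf{X}\mathbf{C}$ literally holds, so both land on (\ref{eq1}), but the identity of the symmetric factor should be stated when the theorem is applied (as in Theorem \ref{teo3}, where the paper takes $\lambda_{i}=\mathrm{ch}_{i}(\mathbf{U}\mathbf{F}_{1})$, i.e.\ the eigenvalues of $\mathbf{Y}$ rather than of either square root).
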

\begin{proof}
  Let $\mathbf{Y} = \mathbf{XC}$ and define $\mathbf{U} = \mathbf{Y}'\mathbf{Y}$ and $\mathbf{V} = \mathbf{X}'\mathbf{X} = \mathbf{X}^{2} = \mathbf{V}'$. Thus
  $$
    \mathbf{U}=\mathbf{Y}'\mathbf{Y}= (\mathbf{XC})'\mathbf{XC} = \mathbf{C}'\mathbf{X}'\mathbf{XC} = \mathbf{C}'\mathbf{V}\mathbf{C} = \mathbf{C}\mathbf{V}\mathbf{C},
  $$
  hence, by \citet[Theorem 2.1.6, p. 58]{mh:05} we have
  \begin{equation}\label{eq2}
    (d\mathbf{U}) = |\mathbf{C}|^{m+1} (d\mathbf{V}).
  \end{equation}
  Now, observe that $\mathbf{U} = \mathbf{Y}'\mathbf{Y}$, then by \citet[Theorem 2.1.14, p. 66]{mh:05}
  $$
    (d\mathbf{Y}) = 2^{-m}|\mathbf{U}|^{-1/2}(d\mathbf{U})(\mathbf{G}'d\mathbf{G}),
  $$
  where $\mathbf{H }\in \mathcal{O}(m)$, thus
  \begin{eqnarray}
    (d\mathbf{U}) &=& 2^{m} |\mathbf{U}|^{1/2}(d\mathbf{Y})(\mathbf{G}'d\mathbf{G})^{-1} \nonumber \\
      &=& 2^{m} |\mathbf{Y}'\mathbf{Y}|^{1/2}(d\mathbf{Y})(\mathbf{G}'d\mathbf{G})^{-1} \nonumber \\
      &=& 2^{m} |\mathbf{Y}|(d\mathbf{Y})(\mathbf{G}'d\mathbf{G})^{-1}. \label{eq3}
  \end{eqnarray}
\begin{scriptsize} 
\begin{remark}
Note that the use of \citet[Theorem 2.1.14, p. 66]{mh:05}, implies the Jacobian
\begin{equation}\label{r11}
  (d\mathbf{Y}) = 2^{-m}|\mathbf{U}|^{-1/2}(d\mathbf{U})(\mathbf{G}'d\mathbf{G}).
\end{equation}
This result is obtained regardless of the assumption of any of the following three factorizations, among other possibilities, see \citet{dggf:05}:
\begin{equation}\label{u}
  \mathbf{Y}=
  \left\{
    \begin{array}{ll}
      \mathbf{GT}, & \hbox{QR decomposition;} \\
      \mathbf{GW}, & \hbox{polar decomposition;} \\
      \mathbf{QDG}', & \hbox{SVD decomposition.}
    \end{array}
  \right.
\end{equation}
Here $\mathbf{G}, \mathbf{Q} \in \mathcal{O}(m)$, $\mathbf{W} \in \mathfrak{P}_{m}$, $\mathbf{T}$ is an upper triangular matrix and $\mathbf{D} = \diag(d_{1}, \dots, d_{m})$. Then after defining $\mathbf{U} = \mathbf{Y}'\mathbf{Y}$,
\begin{equation}\label{yy}
  \mathbf{U} = \mathbf{Y}'\mathbf{Y}=
  \left\{
    \begin{array}{ll}
      \mathbf{T}'\mathbf{T}, & \hbox{Cholescky decomposition;} \\
      \mathbf{W}^{2}, & \hbox{Square root positive definite;} \\
      \mathbf{GD}^{2}\mathbf{G}' = \mathbf{GL}\mathbf{G}', & \hbox{Espectral decomposition,}
    \end{array}
  \right.
\end{equation}
where $\mathbf{L}= \mathbf{D}^{2} =  \diag(l_{1}, \dots, l_{m})$. i.e. $l_{i} = d_{i}^{2}$, $i = 1,\dots,m$.
In terms of the Jacobians for  (\ref{u}) and the corresponding Jacobians for the factorisations (\ref{yy}), then the Jacobian (\ref{r11}) is obtained.
\end{remark}
\end{scriptsize}

Similarly given that, $\mathbf{V} = \mathbf{X}^{2}$, by \citet[Theorem 1.35 and Corollary 2.8.1, pp. 66 and 98, respectively]{mam:97}
\begin{equation}\label{eq4}
  (d\mathbf{V}) = 2^{m} |\mathbf{X}| \prod_{i<j=1}^{m}(\lambda_{i}+\lambda_{j})(d\mathbf{X}),
\end{equation}
where $\lambda_{i}= \ch(\mathbf{X})$, $i = 1, \dots,m$.
Hence, substituting (\ref{eq3}) and (\ref{eq4}) in (\ref{eq2}) we obtain
$$
  2^{m} |\mathbf{Y}|(d\mathbf{Y})(\mathbf{G}'d\mathbf{G})^{-1} = |\mathbf{C}|^{m+1} 2^{m}|\mathbf{X}|\prod_{i<j=1}^{m}(\lambda_{i}+\lambda_{j})(d\mathbf{X}),
$$
  as $\mathbf{Y} = \mathbf{XC}$, hence $|\mathbf{Y}| = |\mathbf{X}||\mathbf{C}|$ from where
  $$
    (d\mathbf{Y}) = |\mathbf{X}|^{-1}|\mathbf{C}|^{-1}|\mathbf{C}|^{m+1}  |\mathbf{X}|\prod_{i<j=1}^{m}(\lambda_{i}+\lambda_{j})(d\mathbf{X})(\mathbf{G}'d\mathbf{G}),
  $$
  and the desired result is achieved.
\end{proof}

The following result assumes that $\mathbf{Y}=\mathbf{Y}'$.

\begin{theorem}\label{teo2}
  Let $\mathbf{Y}, \mathbf{C}, \mathbf{X} \in \mathfrak{P}_{m}$. Define $\mathbf{Y} = \mathbf{XC}$, then  
  \begin{equation}\label{eqc1}
    (d\mathbf{Y}) = |\mathbf{C}|^{m}\prod_{i<j=1}^{m}\left(\frac{\lambda_{i}+\lambda_{j}}{\theta_{i}+\theta_{j}}\right)(d\mathbf{X})
  \end{equation}
  where $\lambda_{i}= \ch(\mathbf{X})$ and $\theta_{i}= \ch(\mathbf{XC})$ , $i = 1, \dots,m$. 
\end{theorem}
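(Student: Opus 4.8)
The plan is to follow the same skeleton as the proof of Theorem \ref{teo1}, changing only the single step where the symmetry of $\mathbf{Y}$ can be exploited. As before I would set $\mathbf{U} = \mathbf{Y}'\mathbf{Y}$ and $\mathbf{V} = \mathbf{X}'\mathbf{X} = \mathbf{X}^{2}$. Since now $\mathbf{Y} = \mathbf{Y}'$, the crucial observation is that $\mathbf{U} = \mathbf{Y}'\mathbf{Y} = \mathbf{Y}^{2}$ is itself the square of a positive definite matrix. The congruence $\mathbf{U} = \mathbf{C}\mathbf{V}\mathbf{C}$ still holds, so by \citet[Theorem 2.1.6, p. 58]{mh:05} equation (\ref{eq2}) is unchanged, namely $(d\mathbf{U}) = |\mathbf{C}|^{m+1}(d\mathbf{V})$, and likewise $(d\mathbf{V})$ is still given by (\ref{eq4}).

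The key departure from Theorem \ref{teo1} is the way $(d\mathbf{U})$ is expressed through $(d\mathbf{Y})$. In the nonsymmetric case one is forced through the map $\mathbf{Y}\mapsto\mathbf{Y}'\mathbf{Y}$, which produces the Haar measure $(\mathbf{G}'d\mathbf{G})$ appearing in (\ref{eq3}). Here, because $\mathbf{U}=\mathbf{Y}^{2}$ with $\mathbf{Y}\in\mathfrak{P}_{m}$, I would instead apply the square-root-of-a-positive-definite-matrix Jacobian (\ref{eq4}) a \emph{second} time, now to the pair $(\mathbf{U},\mathbf{Y})$ rather than to $(\mathbf{V},\mathbf{X})$, obtaining
\begin{equation*}
  (d\mathbf{U}) = 2^{m} |\mathbf{Y}| \prod_{i<j=1}^{m}(\theta_{i}+\theta_{j})(d\mathbf{Y}),
\end{equation*}
where $\theta_{i} = \ch(\mathbf{Y}) = \ch(\mathbf{XC})$. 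This is precisely the step that suppresses the orthogonal degrees of freedom: the $m(m-1)/2$ dimensions carried by the Haar measure in Theorem \ref{teo1} are exactly those that disappear once $\mathbf{Y}$ is required to be symmetric, so a clean $(d\mathbf{Y})$-to-$(d\mathbf{X})$ comparison should emerge.

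To finish I would substitute this expression together with (\ref{eq4}) into (\ref{eq2}), cancel the common factor $2^{m}$, and solve for $(d\mathbf{Y})$. Using $|\mathbf{Y}| = |\mathbf{X}||\mathbf{C}|$, the determinantal prefactor collapses as $|\mathbf{X}|^{-1}|\mathbf{C}|^{-1}\cdot|\mathbf{C}|^{m+1}|\mathbf{X}| = |\mathbf{C}|^{m}$, while the two products of sums of latent roots combine into the single ratio $\prod_{i<j}(\lambda_{i}+\lambda_{j})/(\theta_{i}+\theta_{j})$, yielding (\ref{eqc1}).

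I expect the main difficulty to be conceptual rather than computational, since the algebra above is routine. The one point demanding care is the justification that (\ref{eq4}) may legitimately be reapplied to $\mathbf{U}=\mathbf{Y}^{2}$: this needs $\mathbf{Y}$ to be genuinely positive definite, so that its positive definite square root exists and its latent roots $\theta_{i}$ are positive and distinct enough for the spectral factorisation underlying (\ref{eq4}) to be valid. This is guaranteed by the hypothesis $\mathbf{Y}\in\mathfrak{P}_{m}$, which is exactly the extra assumption that distinguishes this statement from Theorem \ref{teo1} and makes the Haar factor collapse into the denominator $\prod_{i<j}(\theta_{i}+\theta_{j})$.
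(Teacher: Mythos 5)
Your proposal is correct and follows essentially the same route as the paper's own proof: you form $\mathbf{U}=\mathbf{Y}^{2}$ and $\mathbf{V}=\mathbf{X}^{2}$, use the congruence $\mathbf{U}=\mathbf{C}\mathbf{V}\mathbf{C}$ to get $(d\mathbf{U})=|\mathbf{C}|^{m+1}(d\mathbf{V})$, and apply the positive-definite square-root Jacobian to both pairs before solving for $(d\mathbf{Y})$. The paper does exactly this, so nothing further is needed.
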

\begin{proof}
  Let $\mathbf{Y} = \mathbf{XC}$ and define $\mathbf{U} = \mathbf{Y}'\mathbf{Y} = \mathbf{Y}^{2}$ and $\mathbf{V} = \mathbf{X}'\mathbf{X} = \mathbf{X}^{2} $. Thus
  $$
    \mathbf{U}=\mathbf{Y}'\mathbf{Y}=\mathbf{Y}^{2} = (\mathbf{XC})^{2} = (\mathbf{XC})'\mathbf{XC} = \mathbf{C}\mathbf{X}^{2}\mathbf{C} =\mathbf{C}\mathbf{V}\mathbf{C},
  $$
  hence, by \citet[Theorem 2.1.6, p. 58]{mh:05} we have
  \begin{equation}\label{eqc2}
    (d\mathbf{U}) = |\mathbf{C}|^{m+1} (d\mathbf{V}).
  \end{equation}
  Now, observe that $\mathbf{U} = \mathbf{Y}^{2}$, then by \citet[Theorem 1.35 and Corollary 2.8.1, pp. 66 and 98, respectively]{mam:97}  
\begin{equation}\label{eqc3}
  (d\mathbf{U}) = 2^{m} |\mathbf{Y}| \prod_{i<j=1}^{m}(\theta_{i}+\theta_{j})(d\mathbf{Y}).
\end{equation}
 where $\lambda_{i}= \ch(\mathbf{X})$, $i = 1, \dots,m$.   
 Also, $\mathbf{V} = \mathbf{X}^{2}$, 
  \begin{equation}\label{eqc4}
    (d\mathbf{V}) = 2^{m} |\mathbf{X}| \prod_{i<j=1}^{m}(\lambda_{i}+\lambda_{j})(d\mathbf{X}),
  \end{equation}
  where $\lambda_{i}= \mathrm{ch}_{i}(\mathbf{X})$, $i = 1, \dots,m$.
  Therefore, substituting (\ref{eqc3}) and (\ref{eqc4}) in (\ref{eqc2}) we get
  $$
    2^{m} |\mathbf{Y}| \prod_{i<j=1}^{m}(\theta_{i}+\theta_{j})(d\mathbf{Y}) = |\mathbf{C}|^{m+1} 2^{m} |\mathbf{X}|\prod_{i<j=1}^{m}(\lambda_{i}+\lambda_{j})(d\mathbf{X}),
  $$
  as $\mathbf{Y} = \mathbf{XC}$. Thus $|\mathbf{Y}| = |\mathbf{X}||\mathbf{C}|$ and 
  $$
    (d\mathbf{Y}) = |\mathbf{X}\mathbf{C}|^{-1}|\mathbf{C}|^{m+1}  |\mathbf{X}| \prod_{i<j=1}^{m} \left(\frac{\lambda_{i} +\lambda_{j}}{\theta_{i} +\theta_{j}}\right)(d\mathbf{X}),
  $$
  and the result is derived.
\end{proof}

\subsection{Matrix variate beta type II distribution}

Assume that $\mathbf{Y} =(\mathbf{Y}'_{1} \mathbf{Y}'_{2})'$ has a matrix variate normal distribution such that $\mathbf{Y}_{1} \in \Re^{a \times m}$ and $\mathbf{Y}_{2} \in \Re^{b \times m}$, that is, $\mathbf{Y} \in \Re^{a+b \times m}$, see \citet[Theorem 3.1.1, p.79]{mh:05}. Hence, the  matrix beta type II is defined as 
\begin{equation}\label{defbII}
 \mathbf{ F} =
  \left\{%
     \begin{array}{ll}
      (\mathbf{Y}'_{2}\mathbf{Y}_{2})^{-1/2}(\mathbf{Y}'_{1}\mathbf{Y}_{1}) ((\mathbf{Y}'_{2}\mathbf{Y}_{2})^{-1/2})', & \mbox{Definition 1},\\
      (\mathbf{Y}'_{1}\mathbf{Y}_{1})^{1/2}(\mathbf{Y}'_{2}\mathbf{Y}_{2})^{-1} ((\mathbf{Y}'_{1}\mathbf{Y}_{1}) ^{1/2})', & \mbox{Definition 2},\\
      \mathbf{Y}_{1}(\mathbf{Y}'_{2}\mathbf{Y}_{2})^{-1} \mathbf{Y}'_{1}, & \mbox{Definition 3}.\\
    \end{array}%
  \right.
\end{equation}
where $\mathbf{C}^{1/2}(\mathbf{C}^{1/2})' = \mathbf{C}$ is a reasonable nonsingular factorization of $\mathbf{C}$ (non-negative definite square root, Cholesky factorisation, etc.), see \citet{sk:79}, \citet{mh:05}, \citet{fz:90} and \citet{gvb:13}. Observe that the random matrices $\mathbf{H} = \mathbf{Y}'_{1}\mathbf{Y}_{1}$ and $\mathbf{E} = \mathbf{Y}'_{2}\mathbf{Y}_{2}$ have a central Whishart distribution under the null hypothesis, see \citet[Section 3.2, p. 85]{mh:05}. Moreover, in the setting of definitions
1 and 2 its density function is 

\begin{equation}\label{beta}
 dF_{\mathbf{F}}(\mathbf{F}) = \frac{1}{\mathcal{B}_{m}[a/2,b/2]} |\mathbf{F}|^{(a -m -1)/2} |\mathbf{I}_{m} + \mathbf{F}|^{-(a+b)/2} (d\mathbf{F}),
     \quad \mathbf{F} \in \mathfrak{P}_{m},
\end{equation}
with $a \geq m$ and $b \geq m$; where
$\mathcal{B}_{m}[a,b]$ denotes the multivariate beta function defined by
$$
  \mathcal{B}_{m}[r,q] =  \int_{\mathbf{R} \in \mathfrak{P}_{m}} |\mathbf{R}|^{r-(m+1)/2} |I_{m} + \mathbf{R}|^{-(r+q)} (d\mathbf{R}) = \frac{\Gamma_{m}[r]
  \Gamma_{m}[q]}{\Gamma_{m}[r+q]},
$$
here Re$(r) > (m-1)/2$, Re$(q)> (m-1)/2$ and  $\Gamma_{m}[r]$ denotes the multivariate gamma function which is defined by
$$
  \Gamma_{m}[r] = \int_{\mathbf{R} \in \mathfrak{P}_{m}} \etr(-\mathbf{R}) |\mathbf{R}|^{r-(m+1)/2} (d\mathbf{R}),
$$
and $\etr(\cdot) \equiv \exp(\tr(\cdot))$, see \citet{h:55}.

Note that the density, properties and associated distributions under Definition 3 (for $a < m$) are obtained from the
density and properties via definitions 1 or 2 ($a>m$) by the following substitutions:
\begin{center}
\begin{equation}\label{sust}
    m \rightarrow a, \quad a \rightarrow m, \quad b \rightarrow b+a-m,
\end{equation}
\end{center}
see \citet[p. 96]{sk:79} or \citet[eq. (7), p. 455]{mh:05}.

Finally, observe that the matrix variate beta type II distribution is invariant under the  family of elliptically contoured distributions, i. e. the matrix variate normal distribution assumption of $\mathbf{Y} =(\mathbf{Y}'_{1} \mathbf{Y}'_{2})'$ can be replaced by considering $\mathbf{Y} =(\mathbf{Y}'_{1} \mathbf{Y}'_{2})'$ with a  matrix variate elliptically contoured distribution. See \citet[Theorem 5.12, p. 139 and Theorem 5.15, p. 142]{gvb:13} for a theory of matrix variate vector-elliptically contoured distributions and check \citet[Section 3.5.4, p. 114]{fz:90} for a general matrix variate elliptically contoured distribution setting.

\section{Main result}\label{sec3}

This section attains the distribution of a  \textit{nonsymmetric matrix} $\mathbf{F}_{1} = \mathbf{E}^{-1}\mathbf{H}$, when $\mathbf{E}$ and $\mathbf{H}$ have a central Wishart distribution. Note that the matrix $\mathbf{F}_{1}$ is the nonsymmetric version of the matrix beta type II.

The matrix variate beta type II distribution is crucial in \textbf{general multivariate linear hypothesis}. Specifically, the random matrices $\mathbf{H}$ and $\mathbf{E}$ define the matrices of sums of squares and sums of products due to the hypothesis and due to the error, respectively, see \citet[Section 10.2, p. 432]{mh:05}. In addition, although there is no uniformly most powerful invariant test and many functions of the latent roots of $\mathbf{F}$ (or $\mathbf{F}_{1}$) have been proposed as test statistics, any invariant test depends only on $l_{1}, \cdots,l_{s}$, where $s = \rank (\mathbf{E}^{-1} \mathbf{H})$, see \citet[Theorem 10,2,1, p. 437]{mh:05}.

As a consequence, the problem of finding the distribution of $\mathbf{F}_{1} = \mathbf{E}^{-1}\mathbf{H}$ seems to be out of an historical interest because the matrices defined in (\ref{defbII}) and $\mathbf{F}_{1}$ have the same latent roots, i. e. $\ch(\mathbf{F}_{1}) = \ch(\mathbf{E}^{-1}\mathbf{H}) = \ch(\mathbf{H}\mathbf{E}^{-1}) = \ch(\mathbf{E}^{-1/2}\mathbf{H}\mathbf{E}^{-1/2})= \ch(\mathbf{H}^{1/2}\mathbf{E}^{-1}\mathbf{H}^{1/2}) = \ch(\mathbf{Y}_{1}\mathbf{E}^{-1}\mathbf{Y}'_{1}) = \ch(\mathbf{F})$.

However, the mathematical related problem behind the density function of $\mathbf{F}_{1} = \mathbf{E}^{-1}\mathbf{H}$ reveals a number of complex stages that we try to explore next.

\begin{theorem} \label{teo3}
  Let $\mathbf{H}$ and $\mathbf{E}$ be independent random matrices such that $\mathbf{H}$ has a $m$-dimensional Wishart distribution with $a \geq m$ degrees of freedom and expectation $a\mathbf{I}_{m}$, this is  $\mathbf{H} \sim \mathcal{W}_{m}(a,\mathbf{I}_{m})$ and $\mathbf{E} \sim \mathcal{W}_{m}(b,\mathbf{I}_{m})$, $b \geq m$. Then the density function of $\mathbf{F}_{1} = \mathbf{E}^{-1}\mathbf{H}$ is
\begin{equation}\label{F11}
  dF_{\mathbf{F}_{1}}(\mathbf{F}_{1}) =\frac{|\mathbf{F}_{1}|^{(a+m-1)/2}}{2^{(a+b)m/2}\Gamma_{m}[a/2]\Gamma_{m}[b/2]} \; J_{1}\quad(d\mathbf{F}_{1})
\end{equation}
where $J_{1}$ is the double integral
\begin{equation}\label{J1}
   \int_{\mathbf{U} \in \mathfrak{P}_{m}} |\mathbf{U}|^{(a+b)/2-(m+1)} \frac{\etr[-\frac{1}{2}(\mathbf{I}_{m}+\mathbf{F}_{1})\mathbf{U}]} 
   {\displaystyle \prod_{i<j=1}^{m}(\lambda_{i}+\lambda_{j})}(d\mathbf{U})\int_{\mathbf{G} \in \mathcal{O}(m)}(\mathbf{G}'d\mathbf{G})^{-1}
\end{equation}
and $\lambda_{i} = \ch(\mathbf{UF})$ and  $(\mathbf{G}'d\mathbf{G})$ is the Haar measure on $\mathcal{O}(m)$, see \citet[Sectiom 2.1.4, p. 67]{mh:05}.
\end{theorem}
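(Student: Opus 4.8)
The plan is to transport the joint law of the two independent Wishart matrices through the substitution $\mathbf{H}=\mathbf{U}\mathbf{F}_{1}$, $\mathbf{U}=\mathbf{E}$, and then to integrate out the auxiliary variable $\mathbf{U}$ and the orthogonal part. Since $\mathbf{H}\sim\mathcal{W}_{m}(a,\mathbf{I}_{m})$ and $\mathbf{E}\sim\mathcal{W}_{m}(b,\mathbf{I}_{m})$ are independent, the joint density is
\[
\frac{|\mathbf{H}|^{(a-m-1)/2}\,|\mathbf{E}|^{(b-m-1)/2}}{2^{(a+b)m/2}\,\Gamma_{m}[a/2]\,\Gamma_{m}[b/2]}\;\etr\!\left(-\tfrac{1}{2}(\mathbf{H}+\mathbf{E})\right)(d\mathbf{H})(d\mathbf{E}),
\]
which already carries the normalising constant of (\ref{F11}). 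Writing $\mathbf{F}_{1}=\mathbf{E}^{-1}\mathbf{H}$ and $\mathbf{U}=\mathbf{E}$ gives $|\mathbf{H}|=|\mathbf{U}||\mathbf{F}_{1}|$ and, because $\tr(\mathbf{F}_{1}\mathbf{U})=\tr(\mathbf{E}^{-1}\mathbf{H}\mathbf{E})=\tr(\mathbf{H})$, the exponent collapses to $\etr[-\tfrac{1}{2}(\mathbf{I}_{m}+\mathbf{F}_{1})\mathbf{U}]$; this is precisely the kernel of $J_{1}$, and the two determinants contribute a power of $|\mathbf{U}|$ together with the outer power of $|\mathbf{F}_{1}|$.

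The decisive step is the change of measure for $(d\mathbf{H})$ at fixed $\mathbf{U}=\mathbf{E}$, and here lies the real difficulty: $\mathbf{H}$ is symmetric ($m(m+1)/2$ coordinates) whereas the target $\mathbf{F}_{1}=\mathbf{E}^{-1}\mathbf{H}$ is a genuinely nonsymmetric element of $GL(m)$ ($m^{2}$ coordinates), so no ordinary Jacobian between them exists; the missing $m(m-1)/2$ directions must be supplied by the orthogonal group, and this is exactly the role of Theorem \ref{teo1}. Reading $\mathbf{F}_{1}'=\mathbf{H}\mathbf{E}^{-1}$ as an instance of $\mathbf{Y}=\mathbf{X}\mathbf{C}$ with $\mathbf{X}=\mathbf{H}$ and $\mathbf{C}=\mathbf{E}^{-1}$, and using $(d\mathbf{F}_{1})=(d\mathbf{F}_{1}')$, Theorem \ref{teo1} gives
\[
(d\mathbf{F}_{1})=|\mathbf{E}^{-1}|^{m}\prod_{i<j=1}^{m}(\lambda_{i}+\lambda_{j})\,(d\mathbf{H})(\mathbf{G}'d\mathbf{G}),\qquad \lambda_{i}=\ch(\mathbf{H})=\ch(\mathbf{U}\mathbf{F}_{1}).
\]
Solving this relation formally for $(d\mathbf{H})$ is what manufactures the two unusual ingredients of $J_{1}$: the factor $\prod_{i<j}(\lambda_{i}+\lambda_{j})^{-1}$ in the denominator of the integrand and the inverse Haar symbol $(\mathbf{G}'d\mathbf{G})^{-1}$.

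Finally I would substitute this expression into the joint density, gather the scalar powers of $|\mathbf{U}|$ and $|\mathbf{F}_{1}|$, factor the pure $\mathbf{F}_{1}$-power out of the $\mathbf{U}$-integral, and recognise what remains as the double integral $J_{1}$ over $\mathbf{U}\in\mathfrak{P}_{m}$ and $\mathbf{G}\in\mathcal{O}(m)$, delivering (\ref{F11})--(\ref{J1}). The main obstacle is the symmetric-to-nonsymmetric passage of the second paragraph together with the bookkeeping it forces: one must be sure that Theorem \ref{teo1} genuinely accounts for the extra orthogonal directions (the count $m(m+1)=m^{2}+m(m+1)/2-m(m-1)/2$ is a useful consistency check) and one must track with care where the Jacobian determinant $|\mathbf{E}^{-1}|^{m}$ is absorbed, since this determinant is what shifts the exponent of $|\mathbf{F}_{1}|$ away from its symmetric value $(a-m-1)/2$ appearing in (\ref{beta}). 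I would not expect $J_{1}$ to simplify: the coupling $\lambda_{i}=\ch(\mathbf{U}\mathbf{F}_{1})$ and the formal object $(\mathbf{G}'d\mathbf{G})^{-1}$ obstruct any closed form, which is the "complexity" anticipated in the introduction.
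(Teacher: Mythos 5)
Your route is the same as the paper's: write the joint Wishart density, substitute $\mathbf{H}=\mathbf{U}\mathbf{F}_{1}$, $\mathbf{E}=\mathbf{U}$, invoke Theorem \ref{teo1} for the symmetric-to-nonsymmetric change of measure, and then integrate out $\mathbf{U}$ and the orthogonal part. The gap is precisely in the bookkeeping you yourself flag as delicate. Your application of Theorem \ref{teo1} (with $\mathbf{X}=\mathbf{H}$, $\mathbf{C}=\mathbf{E}^{-1}$) gives
\[
(d\mathbf{H})=|\mathbf{U}|^{m}\,\frac{(d\mathbf{F}_{1})}{\displaystyle\prod_{i<j=1}^{m}(\lambda_{i}+\lambda_{j})\,(\mathbf{G}'d\mathbf{G})},
\]
so the determinant $|\mathbf{E}^{-1}|^{m}=|\mathbf{U}|^{-m}$ is a power of $|\mathbf{U}|$, not of $|\mathbf{F}_{1}|$: it cannot ``shift the exponent of $|\mathbf{F}_{1}|$'' as you assert. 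Carrying your Jacobian through the joint density produces $|\mathbf{F}_{1}|^{(a-m-1)/2}\,|\mathbf{U}|^{(a+b)/2-1}$ in the integrand, not the $|\mathbf{F}_{1}|^{(a+m-1)/2}\,|\mathbf{U}|^{(a+b)/2-(m+1)}$ of (\ref{F11})--(\ref{J1}), so the closing claim that the computation ``delivers (\ref{F11})--(\ref{J1})'' does not follow from what precedes it. The paper's own proof inserts the factor $|\mathbf{F}_{1}|^{m}$ rather than $|\mathbf{U}|^{m}$ at this step; that choice does reproduce the stated exponents, but it is not the factor $|\mathbf{C}|^{m}$ that Theorem \ref{teo1} attaches to the positive definite right factor $\mathbf{C}=\mathbf{E}^{-1}$, i.e.\ it is not the reading of Theorem \ref{teo1} you (correctly) adopt.

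The scalar case is a useful arbiter here. For $m=1$ the symmetric/nonsymmetric distinction and the Haar factor disappear, and $F_{1}=H/E$ must have the classical beta type II density proportional to $f^{a/2-1}(1+f)^{-(a+b)/2}$. Your exponents $(a-m-1)/2$ and $(a+b)/2-1$ yield exactly this after the $u$-integration, whereas the exponents in (\ref{F11})--(\ref{J1}) yield a density proportional to $f^{a/2}(1+f)^{-(a+b)/2+1}$. In other words, the computation you set up is the consistent one, but it proves a corrected version of the theorem rather than the theorem as stated; to arrive at (\ref{F11}) you would have to import the paper's $|\mathbf{F}_{1}|^{m}$, which contradicts your own application of Theorem \ref{teo1}. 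You need either to amend the statement you are proving or to explain where an additional factor $|\mathbf{F}_{1}|^{m}|\mathbf{U}|^{-m}$ could legitimately arise; as the argument stands, it cannot.
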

\begin{proof}
The join density of $\mathbf{H}$ and $\mathbf{E}$ is
$$
  \frac{|\mathbf{H}|^{(a-m-1)/2}|\mathbf{E}|^{(b-m-1)/2}}{2^{(a+b)m/2}\Gamma_{m}[a/2]\Gamma_{m}[b/2]}\etr\left[-\frac{1}{2}(\mathbf{H}+\mathbf{E})\right] (d\mathbf{H})\wedge(d\mathbf{E}).
$$
Making the change of variables $\mathbf{F}_{1} = \mathbf{E}^{-1}\mathbf{H}$ and $\mathbf{U} = \mathbf{E}$, then, $\mathbf{H} = \mathbf{EF}_{1}= \mathbf{UF}_{1}$ and $\mathbf{E} = \mathbf{U}$, and by Theorem \ref{teo1}
$$
  (d\mathbf{H})\wedge(d\mathbf{E}) = |\mathbf{F}_{1}|^{m} \frac{(d\mathbf{U})\wedge(d\mathbf{F}_{1})}{\displaystyle \prod_{i<j=1}^{m}(\lambda_{i}+\lambda_{j})(\mathbf{G}'d\mathbf{G})},
$$
with $\lambda_{i} = \ch(\mathbf{H} = \ch(\mathbf{UF}_{1})$, $i = 1,\cdots, m$.
Therefore the joint density of $\mathbf{F}_{1}$, $\mathbf{G}$ and $\mathbf{U}$ is
$$
  \frac{|\mathbf{F}_{1}|^{(a+m-1)/2}|\mathbf{U}|^{(a+b)/2-(m+1)}}{2^{(a+b)m/2}\Gamma_{m}[a/2]\Gamma_{m}[b/2]}
   \etr\left[-\frac{1}{2}(\mathbf{I}_{m}+\mathbf{F}_{1})\mathbf{U}\right] \frac{(d\mathbf{U})\wedge(d\mathbf{F}_{1})}{\displaystyle \prod_{i<j=1}^{m}(\lambda_{i}+\lambda_{j})(\mathbf{G}'d\mathbf{G})}.
$$
The desired result is obtained by integration over $\mathbf{U} \in \mathfrak{P}_{m}$ and  $\mathbf{G} \in \mathcal{O}(m)$.
\end{proof}
\begin{scriptsize}
\begin{remark}
An alternative proof for Theorem \ref{teo3}, conducts the change of variables $\mathbf{F}_{1} = \mathbf{E}^{-1}\mathbf{H}$ and $\mathbf{V} = \mathbf{H}$, into $\mathbf{E} = (\mathbf{F}_{1}\mathbf{H}^{-1})^{-1}= \mathbf{HF}_{1}^{-1}$ and $\mathbf{H} = \mathbf{V}$. Then by Theorem \ref{teo1},
\begin{eqnarray*}
  (d\mathbf{H})\wedge(d\mathbf{E}) &=& |\mathbf{V}|^{m} \frac{(d\mathbf{V})\wedge(d\mathbf{F}_{1}^{-1})}{\displaystyle \prod_{i<j=1}^{m} (\theta_{i}+\theta_{j})(\mathbf{G}'d\mathbf{G})}, \\
    &=& |\mathbf{V}|^{m} |\mathbf{F}_{1}|^{-(m+1)}\frac{(d\mathbf{V})\wedge(d\mathbf{F}_{1})}{\displaystyle \prod_{i<j=1}^{m}(\theta_{i}+\theta_{j})(\mathbf{G}'d\mathbf{G})}, 
\end{eqnarray*}
where $\theta_{i} = \ch(\mathbf{E}) = \ch(\mathbf{VF}^{-1}_{1})$, $i = 1,\cdots, m$. Thus the marginal density of $\mathbf{F}_{1}$ is
\begin{equation}\label{F12}
  dF_{\mathbf{F}_{1}}(\mathbf{F}_{1}) =\frac{|\mathbf{F}_{1}|^{-(b+m+1)/2}}{2^{(a+b)m/2}\Gamma_{m}[a/2]\Gamma_{m}[b/2]}\; J_{2} \quad (d\mathbf{F}_{1})
\end{equation}
where $J_{2}$ is the double integral
\begin{equation}\label{J2}
  \int_{\mathbf{V}\in\mathfrak{P}_{m}} |\mathbf{V}|^{(a+b)/2-1} 
  \frac{\etr\left[-\frac{1}{2}(\mathbf{I}_{m}+\mathbf{F}_{1}^{-1})\mathbf{V}\right]} {\displaystyle \prod_{i<j=1}^{m}(\theta_{i}+\theta_{j})} (d\mathbf{V})\int_{\mathbf{G}\in\mathcal{O}(m)}(\mathbf{G}'d\mathbf{G})^{-1} .
\end{equation}
\end{remark}
\end{scriptsize}
In both cases, the solutions claims for non available results, hidden underlying in the integrals on $\mathbf{U} \in \mathfrak{P}_{m}$ and  $\mathbf{G} \in \mathcal{O}(m)$ or over $\mathbf{V} \in \mathfrak{P}_{m}$ and on $\mathbf{G} \in \mathcal{O}(m)$. However, an explicit expression for the density of $\mathbf{F}_{1}$ can be obtained by proceeding indirectly, as we proceed next.

We have addressed that matrices $\mathbf{F}_{1}$ and $\mathbf{F}$ involve the same latent roots, and thus attain the same distribution, $dF_{l_{1}, \dots,l_{m}}(l_{1}, \dots,l_{m})$. Let $l_{1}, \dots,l_{m}$, $l_{1} > \cdots >l_{m} > 0$ the latent roots of the random matrix $\mathbf{F}$ (or $\mathbf{F}_{1}$); then, their joint density function is \citep[Corollary 10.4.3, p. 451]{mh:05},
\begin{equation}\label{chs}
  \frac{\pi^{m^{2}/2}}{\Gamma_{m}[m/2]  \mathcal{B}_{m}[a/2,b/2]}\prod_{i=1}^{m}\frac{l_{i}^{(a-m-1)/2}}{(1+l_{i})^{(a+b)/2}} 
  \prod_{i<j=1}^{m}(l_{i}-l_{j}) \bigwedge_{i=1}^{m} dl_{i}.
\end{equation}
By \citet[Theorem 3.3.17, p. 104]{mh:05} (also see \citet{j:64}) with $dF_{\mathbf{F}}(\mathbf{F})$ and $\mathbf{F} = \mathbf{GLG}'$, $\mathbf{G} \in \mathcal{O}(m)$ and $\mathbf{L} = \diag(l_{1},\dots,l_{m})$, $l_{1} > \cdots >l_{m} > 0$,
$$
  dF_{l_{1},\dots,l_{m}}(l_{1},\dots,l_{m}) = \frac{\pi^{m^{2}/2}}{\Gamma_{m}[m/2]} \prod_{i<j=1}^{m}(l_{i}-l_{j})
  \hspace{4cm}
$$
\begin{equation}\label{chs1}\hspace{4cm}
    \times  \int_{\mathbf{G} \in \mathcal{O}(m)}dF_{\mathbf{F}}(\mathbf{GLG}') (d\mathbf{G}) \bigwedge_{i=1}^{m} dl_{i},
\end{equation}
where $(d\mathbf{G})$ is the \textit{normalised invariant measure of Haar} or \textit{Haar invariant distribution on $\mathcal{O}(m)$}, see \citet[Section 2.1.4, pp. 67-72]{mh:05}. Moreover, given that
$$
  \vol[\mathcal{O}(m)] = \int_{\mathbf{G}\in \mathcal{O}(m)}(\mathbf{G}'d\mathbf{G}) = \frac{2^{m} \pi^{m^{2}/2}}{\Gamma_{m}[m/2]},
$$
we have that
$$
  (d\mathbf{G}) = \frac{1}{\vol[\mathcal{O}(m)]}(\mathbf{G}'d\mathbf{G}) = \frac{\Gamma_{m}[m/2]}{2^{m} \pi^{m^{2}/2}}(\mathbf{G}'d\mathbf{G}).
$$
Equalising (\ref{chs1}) and (\ref{chs}) we obtain
$$
  \int_{\mathbf{G} \in \mathcal{O}(m)}dF_{\mathbf{F}}(\mathbf{GLG}') (d\mathbf{G})\bigwedge_{i=1}^{m} dl_{i}= \frac{1}{\mathcal{B}_{m}[a/2,b/2]} \prod_{i=1}^{m}\frac{l_{i}^{(a-m-1)/2}}{(1+l_{i})^{(a+b)/2}}\bigwedge_{i=1}^{m} dl_{i}.
$$
Then, given that $\mathbf{G} \in \mathcal{O}(m)$
\begin{eqnarray*}
  \int_{\mathbf{G} \in \mathcal{O}(m)}\hspace{-0.5cm}dF_{\mathbf{F}}(\mathbf{GLG}') (d\mathbf{G})\wedge (d\mathbf{L}) &=& \frac{1}{\mathcal{B}_{m}[a/2,b/2]} \frac{|\mathbf{L}|^{(a-m-1)/2}}{|\mathbf{I}_{m}+\mathbf{L}|^{(a+b)/2}} (d\mathbf{L}), \\ \hspace{-0.5cm}
   &=& \frac{1}{\mathcal{B}_{m}[a/2,b/2]} \frac{|\mathbf{GLG}'|^{(a-m-1)/2}}{|\mathbf{I}_{m}+\mathbf{GLG}'|^{(a+b)/2}}(d\mathbf{L}). 
\end{eqnarray*}
Thus ($\mathbf{F} = \mathbf{GLG}'$)
$$
  dF_{\mathbf{F}}(\mathbf{F}) \int_{\mathbf{G} \in \mathcal{O}(m)}(d\mathbf{G}) = \frac{1}{\mathcal{B}_{m}[a/2,b/2]} \frac{|\mathbf{F}|^{(a-m-1)/2}}{|\mathbf{I}_{m}+\mathbf{F}|^{(a+b)/2}} (d\mathbf{F}).
$$
Finally
$$
  dF_{\mathbf{F}}(\mathbf{F})= \frac{1}{\mathcal{B}_{m}[a/2,b/2]} \frac{|\mathbf{F}|^{(a-m-1)/2}}{|\mathbf{I}_{m}+\mathbf{F}|^{(a+b)/2}} (d\mathbf{F}).
$$
Now, we apply this approach to the nonsymmetric random matrix $\mathbf{F}_{1}$, which must be a nondefective matrix, see \citet[p. 316]{gvl:96} and \citet[p. 141]{d:97}. For getting this end, consider the following definition, see \citet[Colollary 7.1.8, p. 316]{gvl:96}, \citet[p. 142]{d:97} and \citet[Theorem 15, p. 19]{mn:07}:
\begin{definition}[Jordan decomposition]\label{def1}
Let $\mathbf{A} \in GL(m)$ with distinct latent roots. Then there exist a nonsingular $\mathbf{P} \in GL(m)$ and a diagonal matrix $\boldgreek{\Lambda} \in GL(m)$ whose diagonal elements are the latent roots of $\mathbf{A}$, such that
$$
  \mathbf{PAP}^{-1} = \boldgreek{\Lambda}.
$$
\end{definition}
Now, from \citet[Theorem 2.8, p. 95]{mam:97}, 
\begin{proposition}\label{prop1}
  Let $\mathbf{A} \in GL(m)$ with real distinct positive latent roots $\lambda_{1}>\cdots >\lambda_{m}>0$. Let  $\mathbf{P} \in GL(m)$  such that $\mathbf{A} = \mathbf{P}\boldgreek{\Lambda}\mathbf{P}^{-1}$,  $\boldgreek{\Lambda} = \diag(\lambda_{1},\dots,\lambda_{m})$. Then ignoring the sign,
$$
  (d\mathbf{A})= \prod_{i<j=1}^{m}(\lambda_{i}-\lambda_{j})^{2}(d\boldgreek{\Lambda})\wedge (\mathbf{P}^{-1}d\mathbf{P}),
$$
where $(\mathbf{P}^{-1}d\mathbf{P})$ denotes the wedge product in $\mathbf{P}^{-1}d\mathbf{P}$.
\end{proposition}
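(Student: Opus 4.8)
The plan is to differentiate the similarity relation $\mathbf{A} = \mathbf{P}\boldgreek{\Lambda}\mathbf{P}^{-1}$ and then transport the resulting matrix of differentials back into the eigenbasis, where the dependence on $d\boldgreek{\Lambda}$ and on $\mathbf{P}^{-1}(d\mathbf{P})$ decouples cleanly. First I would take the exterior derivative, using $d(\mathbf{P}^{-1}) = -\mathbf{P}^{-1}(d\mathbf{P})\mathbf{P}^{-1}$, to obtain
$$
  d\mathbf{A} = (d\mathbf{P})\boldgreek{\Lambda}\mathbf{P}^{-1} + \mathbf{P}(d\boldgreek{\Lambda})\mathbf{P}^{-1} - \mathbf{P}\boldgreek{\Lambda}\mathbf{P}^{-1}(d\mathbf{P})\mathbf{P}^{-1}.
$$
Conjugating by $\mathbf{P}$ and writing $\boldgreek{\Omega} = \mathbf{P}^{-1}(d\mathbf{P})$ for the matrix of Haar-type differential forms, this collapses to
$$
  \mathbf{B} := \mathbf{P}^{-1}(d\mathbf{A})\mathbf{P} = \boldgreek{\Omega}\boldgreek{\Lambda} - \boldgreek{\Lambda}\boldgreek{\Omega} + d\boldgreek{\Lambda}.
$$

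Next I would read off the entries of $\mathbf{B}$. Since $\boldgreek{\Lambda}$ is diagonal, the commutator $\boldgreek{\Omega}\boldgreek{\Lambda} - \boldgreek{\Lambda}\boldgreek{\Omega}$ has vanishing diagonal and off-diagonal entries $(\lambda_{j}-\lambda_{i})(\boldgreek{\Omega})_{ij}$; hence $B_{ii} = d\lambda_{i}$ and $B_{ij} = (\lambda_{j}-\lambda_{i})(\boldgreek{\Omega})_{ij}$ for $i \neq j$. Two features deserve emphasis: the diagonal entries of $\boldgreek{\Omega}$ cancel and never appear, which reflects the freedom $\mathbf{P} \mapsto \mathbf{P}\mathbf{D}$ ($\mathbf{D}$ diagonal) in the decomposition and accounts for the count $m + m(m-1) = m^{2}$; and the map from $(d\boldgreek{\Lambda}, \boldgreek{\Omega})$ to $\mathbf{B}$ is diagonal in its action, so the Jacobian factorizes entrywise.

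Then I would assemble the wedge product. The transformation $\mathbf{A} \mapsto \mathbf{P}^{-1}\mathbf{A}\mathbf{P}$ is linear in the $m^{2}$ free entries of $\mathbf{A}$ with Jacobian $|\mathbf{P}^{-1}|^{m}|\mathbf{P}|^{m} = 1$, so $(d\mathbf{A}) = (d\mathbf{B})$. Forming the exterior product of all entries of $\mathbf{B}$, the $m$ diagonal entries yield $(d\boldgreek{\Lambda})$, while each off-diagonal entry contributes a scalar factor $(\lambda_{j}-\lambda_{i})$ multiplying $(\boldgreek{\Omega})_{ij}$. Pairing the indices $(i,j)$ and $(j,i)$ gives $(\lambda_{j}-\lambda_{i})(\lambda_{i}-\lambda_{j}) = -(\lambda_{i}-\lambda_{j})^{2}$, so collecting over $i<j$ produces $\prod_{i<j=1}^{m}(\lambda_{i}-\lambda_{j})^{2}$ up to the overall sign $(-1)^{m(m-1)/2}$, and the surviving off-diagonal forms $(\boldgreek{\Omega})_{ij}$ assemble into $(\mathbf{P}^{-1}d\mathbf{P})$. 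Discarding the sign yields the claimed identity.

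The main obstacle is conceptual rather than computational: one must pin down exactly which differentials the symbol $(\mathbf{P}^{-1}d\mathbf{P})$ wedges. The crucial observation is that the diagonal part of $\boldgreek{\Omega}$ drops out of $\mathbf{B}$ identically, so $(\mathbf{P}^{-1}d\mathbf{P})$ must be interpreted as the exterior product of the $m(m-1)$ off-diagonal entries alone; otherwise both the dimension count and the Jacobian would fail. Some care is also needed in the sign bookkeeping that arises from pairing $(i,j)$ with $(j,i)$, but since the statement is asserted only up to sign this is harmless.
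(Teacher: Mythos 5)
Your proposal is correct, and it supplies something the paper itself does not: the paper simply imports this Jacobian from Mathai (1997, Theorem~2.8, p.~95) and gives no proof of its own, so there is no in-paper argument to compare against. Your derivation is the standard one for that cited result: differentiating $\mathbf{A}=\mathbf{P}\boldgreek{\Lambda}\mathbf{P}^{-1}$, conjugating to get $\mathbf{B}=\mathbf{P}^{-1}(d\mathbf{A})\mathbf{P}=\boldgreek{\Omega}\boldgreek{\Lambda}-\boldgreek{\Lambda}\boldgreek{\Omega}+d\boldgreek{\Lambda}$, noting that conjugation by $\mathbf{P}$ acts on the $m^{2}$ differentials with determinant $|\mathbf{P}|^{m}|\mathbf{P}|^{-m}=1$, and reading off the entrywise factors $(\lambda_{j}-\lambda_{i})$ whose pairing over $(i,j)$ and $(j,i)$ yields $\prod_{i<j}(\lambda_{i}-\lambda_{j})^{2}$ up to the sign $(-1)^{m(m-1)/2}$, which the statement discards. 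You also correctly identify and resolve the one genuine subtlety, namely that the diagonal of $\boldgreek{\Omega}$ cancels in the commutator, so $(\mathbf{P}^{-1}d\mathbf{P})$ must mean the exterior product of the $m(m-1)$ off-diagonal entries only; this is exactly consistent with the paper's own parameter-count remark $m^{2}=m(m-1)+m$ following the proposition, and it is the point on which a careless reading of the (vaguely worded) statement could go wrong. Nothing is missing from your argument.
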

Under Proposition \ref{prop1} a generalisation of (\ref{chs1}) for the nonsymmetric case $\mathbf{F}_{1}$  is given by
$$
  dF_{l_{1},\dots,l_{m}}(l_{1},\dots,l_{m}) = \vol[\mathfrak{I}(m)] \prod_{i<j=1}^{m}(l_{i}-l_{j})^{2} 
  \hspace{4cm}
$$
\begin{equation}\label{chs2}\hspace{5cm}
   \int_{\mathbf{P} \in \mathfrak{I}(m)}dF_{\mathbf{F}_{1}}(\mathbf{PLP}^{-1}) (d\mathbf{P}) \bigwedge_{i=1}^{m} dl_{i},
\end{equation} 
where $\mathbf{L} = \diag(l_{1},\dots,l_{m})$, $l_{1} > \cdots >l_{m} > 0$. Thus, if $\mathbf{P} =(\mathbf{p}_{1},\dots,\mathbf{p}_{m})$,  and $\mathbf{P}^{-1} =(\mathbf{q}_{1},\dots,\mathbf{q}_{m})'$, $\mathbf{p}_{i}$, $\mathbf{q}_{i} \in \mathcal{L}_{m,1}$, $i=1,\dots,m$,
$$ 
  \mathfrak{I}(m) =\{\mathbf{P}\in GL(m)|\mathbf{F}_{1}\mathbf{p}_{i} =l_{i}\mathbf{p}_{i} \mbox{ and } \mathbf{q}'_{i} \mathbf{F}_{1}= l_{i} \mathbf{q}'_{i}, i=1,\dots,m\},
$$ 
this is, $\mathbf{p}_{i}$ and $\mathbf{q}_{i}$ are right and left latent vectors for $l_{i}$, $i=1,\dots,m$, of the random matrix $\mathbf{F}_{1}$, see \citet[Propositio 4.2, p. 142]{d:97}. In adition,  
$$
  (d\mathbf{P}) = \frac{1}{\vol[\mathfrak{I}(m)]}(\mathbf{P}^{-1}d\mathbf{P}) \quad\mbox{and}\quad \vol[\mathfrak{I}(m)] = \int_{\mathbf{P} \in \mathfrak{I}(m)}(\mathbf{P}^{-1}d\mathbf{P}),
$$
noting that $\mathbf{P}^{-1}d\mathbf{P} = -d\mathbf{P}^{-1}\mathbf{P}$.
\begin{scriptsize}
\begin{remark}
Observe that in the Jordan decomposition, $\mathbf{A}$, $\mathbf{P} \in GL(m)$ and $\boldgreek{\Lambda} = \diag(\lambda_{1},\dots,\lambda_{m})$  such that $\mathbf{A} = \mathbf{P}\boldgreek{\Lambda}\mathbf{P}^{-1}$. Then $\mathbf{A}$ contains $m^{2}$ functionally independent real variables and $\boldgreek{\Lambda}$ has $m$ functionally independent real variables, then $\mathbf{P}$ involves $m(m-1)$ functionally independent real variables, such that the \textit{parameter count} is, see \citet{er:05},
$$
  m^{2} = m(m-1)+m.
$$
Moreover, the elements of $\mathbf{P} \in GL(m)$ can be regarded as the coordinates of a point on a $m(m - 1)$-dimensional manifold in $GL(m)$, see \citet[Propositio 4.2, p. 142]{d:97}.
\end{remark}
\end{scriptsize}
Proceeding as in the symmetric case, after equalising (\ref{chs2}) and (\ref{chs}) we have
$$
   \int_{\mathbf{P} \in \mathfrak{I}(m)}dF_{\mathbf{F}_{1}}(\mathbf{PLP}^{-1}) (d\mathbf{P}) \bigwedge_{i=1}^{m} dl_{i} = \frac{\pi^{m^{2}/2}}{\Gamma_{m}[m/2]  \mathcal{B}_{m}[a/2,b/2]\vol[\mathfrak{I}(m)]}
\hspace{2cm}
$$
$$\hspace{6cm}
  \times \prod_{i=1}^{m}\frac{l_{i}^{(a-m-1)/2}}{(1+l_{i})^{(a+b)/2}}  \prod_{i<j=1}^{m}(l_{i}-l_{j})^{-1}
  \bigwedge_{i=1}^{m} dl_{i}.
$$
Therefore
$$
   \int_{\mathbf{P} \in \mathfrak{I}(m)}dF_{\mathbf{F}_{1}}(\mathbf{PLP}^{-1}) (d\mathbf{P}) (d\mathbf{L}) = \frac{\pi^{m^{2}/2}}{\Gamma_{m}[m/2]  \mathcal{B}_{m}[a/2,b/2]\vol[\mathfrak{I}(m)]}
\hspace{2cm}
$$
$$\hspace{6cm}
  \times \frac{|\mathbf{L}|^{(a-m-1)/2}}{|\mathbf{I}_{m}+\mathbf{L}|^{(a+b)/2}}  \prod_{i<j=1}^{m}(l_{i}-l_{j})^{-1}
  (d\mathbf{L}).
$$
Then
$$
   \int_{\mathbf{P} \in \mathfrak{I}(m)}dF_{\mathbf{F}_{1}}(\mathbf{PLP}^{-1}) (d\mathbf{P}) (d\mathbf{L}) = \frac{\pi^{m^{2}/2}}{\Gamma_{m}[m/2]  \mathcal{B}_{m}[a/2,b/2]\vol[\mathfrak{I}(m)]}
\hspace{2cm}
$$
$$\hspace{5cm}
  \times \frac{|\mathbf{PLP}^{-1}|^{(a-m-1)/2}}{|\mathbf{I}_{m}+\mathbf{PLP}^{-1}|^{(a+b)/2}}  \prod_{i<j=1}^{m}(l_{i}-l_{j})^{-1}
  (d\mathbf{L}).
$$
Thus 
$$
   dF_{\mathbf{F}_{1}}(\mathbf{F}_{1}) \int_{\mathbf{P} \in \mathfrak{I}(m)}(d\mathbf{P})  = \frac{\pi^{m^{2}/2}}{\Gamma_{m}[m/2]  \mathcal{B}_{m}[a/2,b/2]\vol[\mathfrak{I}(m)]}
\hspace{2cm}
$$
$$\hspace{6cm}
  \times \frac{|\mathbf{F}_{1}|^{(a-m-1)/2}}{|\mathbf{I}_{m}+\mathbf{F}_{1}|^{(a+b)/2}}  \prod_{i<j=1}^{m}(l_{i}-l_{j})^{-1}
  (d\mathbf{F}_{1}).
$$
Finally,
$$
   dF_{\mathbf{F}_{1}}(\mathbf{F}_{1}) = \frac{\pi^{m^{2}/2}}{\Gamma_{m}[m/2]  \mathcal{B}_{m}[a/2,b/2]\vol[\mathfrak{I}(m)]}
\hspace{4cm}
$$
\begin{equation}\label{F1}
  \hspace{4cm}
  \times \frac{|\mathbf{F}_{1}|^{(a-m-1)/2}}{|\mathbf{I}_{m}+\mathbf{F}_{1}|^{(a+b)/2}}  \prod_{i<j=1}^{m}(l_{i}-l_{j})^{-1}
  (d\mathbf{F}_{1}).
\end{equation}
where $l_{i} = \ch(\mathbf{F}_{1})$, $l_{1} > \cdots >l_{m} > 0$.
\begin{scriptsize}
\begin{remark}
Instead of the Jordan decomposition, one would use the \textit{Schur decomposition}, see \citet[Theorem 7.1.3, p. 313]{gvl:96}, \citet[Theorem A9.1, p. 587]{mh:05} and \citet[Theorem 12, p. 17]{mn:07}. Then, considering the Jacobian in \citet[Theorem 2.19, p, 130]{mam:97}, where $\mathbf{T}$ is a lower (or upper) triangular matrix, then, it can be written as $\mathbf{T} = \boldgreek{\Lambda} + \mathbf{M}$ with $\boldgreek{\Lambda} = \diag(\lambda_{1}, \dots,\lambda_{m})$, $\lambda_{1}> \cdots > \lambda_{m} > 0$. Thus $\mathbf{M}$ is strictly lower (upper) triangular matrix ($m_{ii} = 0$, $i = 1, \dots,m$), and a similar expression to (\ref{chs2}) is obtained in terms of the Schur decomposition. In that case, it shall be a function of a double integral, one over the group of orthogonal matrices $\mathcal{O}(m)$ and the other integral runs over the strict triangular matrix $\mathbf{M}$.
\end{remark}
\end{scriptsize}

We end this work by and indirect solution of the the double integrals (\ref{J1}) and (\ref{J2}).

First, equating expressions (\ref{F11}) and (\ref{F1}) gives that
$$
  J_{1} = \frac{2^{(a+b)m/2} \pi^{m^{2}/2}\Gamma_{m}[a/2,b/2]}{\Gamma_{m}[m/2] \vol[\mathfrak{I}(m)]}
            \frac{|\mathbf{F}_{1}|^{-m}}{|\mathbf{I}_{m}+\mathbf{F}_{1}|^{(a+b)/2}} \prod_{i<j=1}^{m}(l_{i}-l_{j})^{-1}
$$
Similarly,  (\ref{F12}) and (\ref{F1}) join with $|\mathbf{I}_{m}+\mathbf{F}_{1}| = |\mathbf{I}_{m}+\mathbf{F}^{-1}_{1}||\mathbf{F}_{1}|$ and for $\mu_{i} = \ch(\mathbf{F}_{1}^{-1})$, then $\mu_{i} = l_{i}^{-1}$, $i = 1,\dots,m$. Thus we have that
$$
  J_{2} = \frac{2^{(a+b)m/2} \pi^{m^{2}/2}\Gamma_{m}[a/2,b/2]}{\Gamma_{m}[m/2] \vol[\mathfrak{I}(m)]}
          |\mathbf{I}_{m}+\mathbf{F}_{1}^{-1}|^{-(a+b)/2} \prod_{i>j=1}^{m}\frac{\mu_{i}\mu_{j}}{(\mu_{j}-\mu_{i})},
$$
where $\mu_{m} > \cdots > \mu_{1}>0$.

The indirect proofs have led some interesting open problems to be clarified in future, in particular Haar inverse measures and $\vol[\mathfrak{I}(m)]$. 



 \end{document}